\documentclass[3p,10pt]{elsarticle}

\usepackage{amssymb,latexsym,amsmath,color,subfig,amsthm,epstopdf,siunitx}



\journal{}

\newcommand{\eps}{\varepsilon}
\newcommand{\set}[1]{\left\{#1\right\}}

\newcommand{\kb}{k_\mathrm{b}}
\newcommand{\epsb}{\eps_\mathrm{b}}
\newcommand{\mub}{\mu_\mathrm{b}}
\newcommand{\sigmab}{\sigma_\mathrm{b}}
\newcommand{\p}{\partial}
\newcommand{\E}{\mathrm{E}}
\newcommand{\mA}{\mathbf{A}}
\newcommand{\mE}{\mathbf{E}}
\newcommand{\mH}{\mathbf{H}}

\newcommand{\mU}{\mathbf{G}}
\newcommand{\mV}{\mathbf{F}}
\newcommand{\mW}{\mathbf{W}}
\newcommand{\ma}{\mathbf{a}}
\newcommand{\mr}{\mathbf{r}}

\newcommand{\vt}{\boldsymbol{\theta}}

\DeclareMathOperator*{\inc}{inc}
\DeclareMathOperator*{\scat}{scat}
\DeclareMathOperator*{\tot}{tot}
\DeclareMathOperator*{\kir}{KIR}

\theoremstyle{plain}
\newtheorem{theorem}{Theorem}[section]

\newtheorem{corollary}{Corollary}[section]

\theoremstyle{remark}

\newtheorem{property}{Property}[section]
\newtheorem{example}{Example}[section]

\begin{document}

\begin{frontmatter}



\title{Real-time tracking of moving objects from scattering matrix in real-world microwave imaging}

\author[SCH1,SCH2]{Seong-Ho Son}
\ead{son@sch.ac.kr}
\address[SCH1]{Department of ICT Convergence, Soonchunhyang University, Asan, 31538, Republic of Korea}
\address[SCH2]{Department of Mechanical Engineering, Soonchunhyang University, Asan, 31538, Republic of Korea}
\author[ETRI]{Kwang-Jae Lee}
\ead{reolee0122@etri.re.kr}
\address[ETRI]{Radio Environment \& Monitoring Research Group, Electronics and Telecommunications Research Institute, Daejeon, 34129, Republic of Korea}
\author[KMU]{Won-Kwang Park}
\ead{parkwk@kookmin.ac.kr}
\address[KMU]{Department of Information Security, Cryptology, and Mathematics, Kookmin University, Seoul  02707, Republic of Korea}

\begin{abstract}
The problem of the real-time microwave imaging of small, moving objects from a scattering matrix, whose elements are measured scattering parameters, without diagonal elements is considered herein. An imaging algorithm based on a Kirchhoff migration operated at single frequency is designed, and its mathematical structure is investigated by establishing a relationship with an infinite series of Bessel functions of integer order and antenna configuration. This is based on the application of the Born approximation to the scattering parameters of small objects. The structure explains the reason for the detection of moving objects via a designed imaging function and supplies its some properties. To demonstrate the strengths and weaknesses of the proposed algorithm, various simulations with real-data are conducted.
\end{abstract}

\begin{keyword}
Kirchhoff migration \sep moving objects \sep scattering matrix \sep Bessel function \sep simulation results



\end{keyword}

\end{frontmatter}





The real-time tracking of unknown objects using microwaves is an important and interesting inverse scattering problem that arises in fields such as physics, engineering, and military services, and is nowadays highly related to human life \cite{AKKK,BTS,CLFL,F2,MK1,OC,SK}. Most studies have focused on the development of detection algorithms for unknown objects located in a wide area; however, the detection of the movements of small objects or changes in permittivity/conductivity distributions in small or narrow regions has not been sufficiently investigated. Detecting small objects is not an easy problem to solve but can be applied to various real-world problems, such as the diagnoses of cerebral hemorrhages, imaging of crack propagations in walls or bridges, and through-wall imaging. In general, objects exhibit material properties (permittivities and conductivities) that are different from their background media; thus, most studies have focused on retrieving complete information about these properties. Unfortunately, owing to the intrinsic ill-posedness and nonlinearity of the inverse scattering problem, it cannot be successfully and satisfactorily resolved.

To solve this problem, various inversion techniques and corresponding computational environments have been investigated. The most popular and appropriate approaches in real-world applications are based on the Newton-type iteration scheme, which involves retrieving information about the shape, location, and material properties of unknown objects (minimizers); these unknown objects minimize the discrete norm (generally, $\ell^2-$norm) between the measured data in the presence of true and man-made objects. We refer to several remarkable studies \cite{DL,HSM1,IMD,K,RMMP,S1,VXB}.

Although iteration-based techniques have shown their feasibilities, some preceding conditions, such as good initial guesses that are close to the objects,  \textit{a priori} information about unknown objects, appropriate regularization terms significantly dependent on the problem, evaluations of complex Fr{\'e}chet (or domain) derivatives, must be fulfilled to guarantee successful iteration procedures. Furthermore, large computational costs are still incurred, and extensions to multiple objects still prove to be difficult. Hence, Newton-type iteration schemes are not appropriate in designing real-time detection of moving objects.

As alternatives, various non-iterative algorithms have been investigated, e.g., bifocusing method \cite{JBRBTFC,KP4,KPS,SP2}, direct sampling method \cite{IJZ1,IJZ2,P-DSM3,SLP}, MUltiple SIgnal Classification (MUSIC) algorithm \cite{AILP,P-MUSIC1,P-MUSIC6,PKLS}, linear sampling method \cite{ACZ,CHM,HM1,KR}, topological derivatives \cite{B1,GP,LR1,P-TD5}, Kirchhoff and subspace migrations \cite{AGKLS,AGKPS,P-SUB3,P-SUB8}, and orthogonality sampling method \cite{HN,P-OSM1,P-OSM2,P1}. It is worth mentioning that although complete information about unknown objects cannot be retrieved via non-iterative algorithms, such algorithms are fast, effective, and stable in detecting multiple objects without any \textit{a priori} information. So, instead of completely reconstructing the objects, it would be appropriate to design a real-time algorithm for detecting moving objects. Let us emphasize that in order to apply MUSIC or subspace migration for identifying moving objects, a careful threshold of singular values of the scattering matrix is essential. Topological derivative based technique or linear sampling method are very stable and effective non-iterative techniques but additional operations such as solving adjoint problems or nonlinear integral equations are required. Hence, these methods would be inappropriate as real-time detection techniques. Fortunately, bifocusing method, direct and orthogonality sampling methods with multiple sources, and Kirchhoff migration require no additional operations; therefore, we believe that they can be applicable to design a real-time detection algorithm.

Herein, we apply Kirchhoff migration for a real-time tracking of moving small objects from the scattering matrix, whose elements are measured scattering parameters at a fixed frequency. In contrast to the traditional studies in inverse scattering problems, the diagonal elements of a scattering matrix cannot be determined from a microwave machine, i.e., scattering parameters cannot be obtained when the transmitting and receiving antennas are at the same location. Considering such a limitation, an imaging function  based on the Kirchhoff migration for tracking moving objects is designed, and its mathematical structure was rigorously analyzed by establishing a relation with an infinite series of Bessel functions of integer order and antenna configurations. Based on this relation, we can demonstrate that designed tracking algorithm is fast and effective. Moreover, it is possible to guarantee unique determination of moving objects. To illustrate the feasibilities of the designed technique and to avoid committing \textit{inverse crimes}, numerical simulations are performed with experimental data generated by a microwave machine \cite{KLKJS}.

The remainder of this paper is organized as follows. Section \ref{sec:2} briefly introduces the forward problem and scattering parameters caused by the presence of objects. In Section \ref{sec:3}, we describe the Kirchhoff migration-based imaging algorithm without the diagonal elements of the scattering matrix, analyze the mathematical structure of the imaging function by establishing an infinite series of Bessel functions of integer order and antenna setting, and discuss some properties of the imaging function. Section \ref{sec:4} discusses the results of the simulation, which used real-world data to demonstrate the effectiveness of the algorithm. Section \ref{sec:5} concludes the paper.

\section{The forward problem and scattering parameters}\label{sec:2}
In this section, the forward model and scattering parameters are briefly introduced in the presence of a set of objects with small diameter. Let $D_m(t)$, $m=1,2,\cdots,M$, be an (cross-sectional) object with location $\mr_m(t)$ at time $t$ and $D(t)$ denotes the collection of $D_m(t)$. Throughout this study, all $D_m(t)$ are included in a homogeneous region of interest (ROI) $\Omega\subset\mathbb{R}^2$ and surrounded by several transmitting and receiving antennas $\mA_n$ located at $\ma_n$, $n=1,2,\cdots,N$, with $|\ma_n|=R$. We denote $\mathcal{A}$ as the collection of antennas $\mA_n$ and assume that all $D_m(t)$ and $\Omega$ are characterized by their dielectric permittivity and electric conductivity at a given angular frequency $\omega=2\pi f$, i.e., the magnetic permeability of all objects are constant such that $\mu(\mr,t)\equiv\mub=4\pi\times\SI{e-7}{\henry/\m}$, $\mr\in\Omega$. We denote $\eps(\mr,t)$ and $\sigma(\mr,t)$ as the piecewise constant permittivity and conductivity
\[\eps(\mr,t)=\left\{\begin{array}{rcl}
                     \eps_m & \mbox{for} & \mr\in D_m(t),\\
                     \epsb & \mbox{for} & \mr\in\Omega\backslash\overline{D(t)},
                   \end{array}
\right.
\quad\mbox{and}\quad
\sigma(\mr,t)=\left\{\begin{array}{rcl}
                     \sigma_m & \mbox{for} & \mr\in D_m(t),\\
                     \sigmab & \mbox{for} & \mr\in\Omega\backslash\overline{D(t)},
                   \end{array}
\right.\]
respectively. With this, we further assume that the following relationships hold for $m=1,2,\cdots,M$,
\begin{equation}\label{Relationship}
\omega\epsb\gg\sigmab\quad\text{and}\quad\left(\sqrt{\frac{\eps_m}{\epsb}}-1\right)\text{diam}(D_m)<\frac{\text{wavelength}}{4},
\end{equation}
where $\text{diam}(D_m)$ denotes the diameter of $D_m$.

Let $\mE_{\inc}(\ma_q,\mr)$ be the time-harmonic incident electric field in a homogeneous medium resulting from a point current density at $\mA_q$. Then, based on the Maxwell equation, $\mE_{\inc}(\ma_q,\mr)$ satisfies
\[\nabla\times\mE_{\inc}(\ma_q,\mr)=i\omega\mub\mH_{\inc}(\ma_q,\mr)\quad\mbox{and}\quad\nabla\times\mH_{\inc}(\ma_q,\mr)=(\sigmab-i\omega\epsb)\mE_{\inc}(\ma_q,\mr).\]
Let $\mE_{\tot}(\mr,\ma_p)$ be the corresponding total electric field due to the existence of $D(t)$ measured at $\mA_p$. Then, $\mE_{\tot}(\mr,\ma_p)$ satisfies
\[\nabla\times\mE_{\tot}(\mr,\ma_p)=i\omega\mub\mH_{\tot}(\mr,\ma_p)\quad\mbox{and}\quad\nabla\times\mH_{\tot}(\mr,\ma_p)=(\sigma(\mr,t)-i\omega\eps(\mr,t))\mE_{\tot}(\mr,\ma_p)\]
with the transmission condition on $\p D_m$, $m=1,2,\cdots,M$. Here, the time-harmonic dependence $e^{-i\omega t}$ is assumed.

The scattering parameter ($S-$parameter) $S(p,q,t)$, $p,q=1,2,\cdots,N$, is defined as the ratio of the output voltage at the $\mA_p$ and the input voltage at the $\mA_q$ at the time $t$. Let $S_{\tot}(p,q,t)$ and $S_{\inc}(p,q,t)$ be the scattering parameter data in the presence and absence of $D(t)$, respectively, and denote $S_{\scat}(p,q,t)=S_{\tot}(p,q,t)-S_{\inc}(p,q,t)$ as the scattered field $S$-parameter. Based on the simulation setup introduced in recent works \cite{KLKJS,P-SUB16,P-SUB18}, only the $z$-components of $\mE_{\inc}(\ma_q,\mr)$ and $\mE_{\tot}(\mr,\ma_p)$ can be handled so that by denoting $\E_{\inc}^{(z)}(\ma_q,\mr)$ and $\E_{\tot}^{(z)}(\mr,\ma_p)$ as the $z$-component of incident and total fields, respectively, $S_{\scat}(p,q,t)$ can be presented as the following integral equation formula:
\begin{equation}\label{Sparameter}
S_{\scat}(p,q,t)=\frac{ik_0^2}{4\omega\mub}\int_{\Omega}\left(\frac{\eps(\mr',t)-\epsb}{\epsb}+i\frac{\sigma(\mr',t)-\sigmab}{\omega\epsb}\right)\E_{\inc}^{(z)}(\ma_q,\mr')\E_{\tot}^{(z)}(\mr',\ma_p)d\mr',
\end{equation}
where $k_0$ denotes the lossless background wavenumber that satisfies $k_0^2=\omega^2\epsb\mub$, refer to \cite{HSM2}.

\section{Kirchhoff migration for a real-time detection: introduction, analysis, and some properties}\label{sec:3}
\subsection{Introduction to imaging function of the Kirchhoff migration}
Herein, we introduce the traditional imaging function of the Kirchhoff migration and apply it for a real-time mornitoring of moving objects $D_m(t)$ generated from the scattering matrix $\mathbb{K}(t)$ such that
\begin{equation}\label{ScatteringMatrix}
\mathbb{K}(t)=\begin{bmatrix}
                  S_{\scat}(1,1,t) & S_{\scat}(1,2,t) & \cdots & S_{\scat}(1,N,t) \\
                  S_{\scat}(2,1,t) & S_{\scat}(2,2,t) & \cdots & S_{\scat}(2,N,t) \\
                  \vdots & \vdots & \ddots & \vdots \\
                 S_{\scat}(N,1,t) & S_{\scat}(N,2,t) & \cdots & S_{\scat}(N,N,t)
               \end{bmatrix}.
\end{equation}

Unfortunately, it is very difficult to apply \eqref{Sparameter} directly to design an imaging function because exact expression of $\E_{\tot}^{(z)}(\mr,\ma_p)$ is unknown. Since we already assumed the condition \eqref{Relationship} holds, every $D_m(t)$ can be regarded as an object with small diameter. Thus, based on \cite{SKL}, it is possible to apply the Born approximation $\E_{\tot}^{(z)}(\ma_p,\mr')\approx\E_{\inc}^{(z)}(\ma_p,\mr')$ so that \eqref{Sparameter} can be approximated as
\begin{equation}\label{SparameterIntegral}
S_{\scat}(p,q,t)\approx\frac{ik_0^2}{4\omega\mub}\int_{D(t)}\left(\frac{\eps(\mr',t)-\epsb}{\epsb}+i\frac{\sigma(\mr',t)-\sigmab}{\omega\epsb}\right)\E_{\inc}^{(z)}(\ma_q,\mr')\E_{\inc}^{(z)}(\ma_p,\mr')d\mr'.
\end{equation}
and correspondingly, applying \eqref{SparameterIntegral} to \eqref{ScatteringMatrix}, $\mathbb{K}(t)$ can be decomposed as
\begin{align}
\begin{aligned}\label{Decomposition}
\mathbb{K}(t)&\approx\frac{ik_0^2}{4\omega\mub}\int_{D(t)}\left(\frac{\eps(\mr',t)-\epsb}{\epsb}+i\frac{\sigma(\mr',t)-\sigmab}{\omega\epsb}\right)\begin{bmatrix}
\smallskip\E_{\inc}^{(z)}(\ma_1,\mr')\\
\E_{\inc}^{(z)}(\ma_2,\mr')\\
\vdots\\
\E_{\inc}^{(z)}(\ma_N,\mr')
\end{bmatrix}
\begin{bmatrix}
\E_{\inc}^{(z)}(\ma_1,\mr')&
\cdots&
\E_{\inc}^{(z)}(\ma_N,\mr')
\end{bmatrix}d\mr'\\
&:=\frac{ik_0^2}{4\omega\mub}\int_{D(t)}\mathcal{O}(\mr',t)\mU(\mr',t)\mU(\mr',t)^{\mathtt{T}}d\mr'.
\end{aligned}
\end{align}
Based on the structure of the above decomposition, let us generate the following unit vector: for each $\mr\in\Omega$,
\[\mV(\mr)=\frac{\mW(\mr)}{|\mW(\mr)|},\quad\text{where}\quad\mW(\mr)=\begin{bmatrix}
\smallskip\E_{\inc}^{(z)}(\ma_1,\mr)\\
\E_{\inc}^{(z)}(\ma_2,\mr)\\
\vdots\\
\E_{\inc}^{(z)}(\ma_N,\mr)
\end{bmatrix}.\]

Now, let us define an inner product $\langle\cdot,\cdot\rangle_{\ell^2(\mathcal{A})}$ on the Lebesgue space $\ell^2(\mathcal{A})$ such that
\begin{equation}\label{InnerProduct}
\langle\mV(\mr),\mU(\mr',t)\rangle_{\ell^2(\mathcal{A})}=\overline{\mV(\mr)}^{\mathtt{T}}\mU(\mr',t)=\frac{1}{|\mW(\mr)|}\sum_{n=1}^{N}\overline{\E_{\inc}^{(z)}(\ma_n,\mr)}\E_{\inc}^{(z)}(\ma_n,\mr').
\end{equation}
Then, based on the orthogonal property of the $\ell^2(\mathcal{A})$, we can examine that the value of $\langle\mV(\mr),\mU(\mr',t)\rangle_{\ell^2(\mathcal{A})}$ will reach its maximum value when $\mr=\mr'\in D(t)$. Correspondingly, the following imaging function of the Kirchhoff migration can be introduced: for each $\mr\in\Omega$,
\begin{equation}\label{ImagingFunctionTraditional}
\mathfrak{F}_{\kir}(\mr,t)=|\overline{\mV(\mr)}^{\mathtt{T}}\mathbb{K}(t)\overline{\mV(\mr)}|.
\end{equation}
Based on the \eqref{Decomposition} and \eqref{InnerProduct}, the value of $\mathfrak{F}_{\kir}(\mr,t)$ is expected to reach its maximum value when $\mr=\mr'\in D(t)$. For a detailed description of the imaging function, we refer to \cite{AGKPS}.

Contrary to the traditional simulation setup, each $N$ antenna is used for signal transmission, whereas the remaining $N-1$ antennas are used for signal reception. Therefore, the value of $S_{\scat}(n,n)$ for $n=1,2,\cdots,N$, i.e., the diagonal elements of $\mathbb{K}(t)$ cannot be determined so that generated scattering matrix is of the following form
\[\mathbb{K}(t)=\begin{bmatrix}
\text{unknown} & S_{\scat}(1,2,t) & \cdots & S_{\scat}(1,N,t) \\
S_{\scat}(2,1,t) & \text{unknown} & \cdots & S_{\scat}(2,N,t) \\
\vdots & \vdots & \ddots & \vdots \\
S_{\scat}(N,1,t) & S_{\scat}(N,2,t) & \cdots & \text{unknown}
\end{bmatrix}.\]
For a related discussions, we recommend some references \cite{P-SUB11,P-MUSIC6,SSKLJ}. In this study, we set $S_{\scat}(p,q,t)=0$ instead of the unknown measurement data and the following scattering matrix is considered for the design of the imaging function:
\begin{equation}\label{MSRWithout}
\mathbb{G}(t)=\begin{bmatrix}
                  0 & S_{\scat}(1,2,t) & \cdots & S_{\scat}(1,N,t) \\
                  S_{\scat}(2,1,t) & 0 & \cdots & S_{\scat}(2,N,t) \\
                  \vdots & \vdots & \ddots & \vdots \\
                 S_{\scat}(N,1,t) & S_{\scat}(N,2,t) & \cdots & 0
               \end{bmatrix}.
\end{equation}
We refer to \cite{P-SUB16,P-OSM1,P-KIR1} for an explanation of why the diagonal elements of scattering matrix were set to zero. Although, the diagonal elements are missing, the following imaging function can be introduced with a similar way to \eqref{ImagingFunctionTraditional}: for each $\mr\in\Omega$,
\begin{equation}\label{ImagingFunction}
\mathfrak{F}(\mr,t)=|\overline{\mV(\mr)}^{\mathtt{T}}\mathbb{G}(t)\overline{\mV(\mr)}|.
\end{equation}
Theoretical reason of the applicability of real-time object detection is discussed next.

\subsection{Structure of the imaging function}
In order to explain the availability of real-time detection of moving object, mathematical structure of the designed imaging function is carefully explored by establishing a relationship with an infinite series of Bessel functions.

\begin{theorem}[Structure of imaging function]\label{TheoremStructure}
Let $\vt_n=\ma_n/R=(\cos\theta_n,\sin\theta_n)^{\mathtt{T}}$ and $\mr-\mr'=|\mr-\mr'|(\cos\phi,\sin\phi)^{\mathtt{T}}$. If $|\kb(\ma_n-\mr)|\geq0.25$ for all $\mr\in\Omega$, then $\mathfrak{F}(\mr,t)$ can be represented as follows:
\begin{multline}\label{Structure}
\mathfrak{F}(\mr,t)\approx\Bigg|\int_{D(t)}\mathcal{C}(\mr',t)\left(J_0(\kb|\mr-\mr'|)+\frac{1}{N}\sum_{n=1}^{N}\mathcal{E}(\kb|\mr-\mr'|,t)\right)^2d\mr'\\
-\frac{1}{N}\int_{D(t)}\mathcal{C}(\mr',t)\left(J_0(2\kb|\mr-\mr'|)+\frac{1}{N}\sum_{n=1}^{N}\mathcal{E}(2\kb|\mr-\mr'|,t)\right)d\mr'\Bigg|,
\end{multline}
where $J_s$ is a Bessel function of the order $s$ of the first kind,
\[\mathcal{C}(\mr',t)=\frac{N\omega\epsb}{32\kb R\pi}\left(\frac{\eps(\mr',t)-\epsb}{\epsb}+i\frac{\sigma(\mr',t)-\sigmab}{\omega\epsb}\right),\quad\text{and}\quad\mathcal{E}(\kb|\mr-\mr'|,t)=\sum_{s=-\infty,s\ne0}^{\infty}i^s J_{s}(\kb|\mr-\mr'|)e^{is(\theta_n-\phi)}.\]
\end{theorem}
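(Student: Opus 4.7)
The plan is to plug the Born-approximated scattering parameters \eqref{SparameterIntegral} into the bilinear form $\overline{\mV(\mr)}^{\mathtt{T}}\mathbb{G}(t)\overline{\mV(\mr)}$, pass the antenna sums through a far-field asymptotic and the Jacobi--Anger expansion, and track constants so that the final prefactor matches $\mathcal{C}(\mr',t)$.

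First, I would write the bilinear form as the off-diagonal double sum $\sum_{p\neq q}\overline{v_p(\mr)}\,S_{\scat}(p,q,t)\,\overline{v_q(\mr)}$, where $v_n$ is the $n$th entry of $\mV$, and insert \eqref{SparameterIntegral}. Using the elementary identity $\sum_{p\neq q} a_p a_q = (\sum_p a_p)^2 - \sum_p a_p^2$ with $a_p = \overline{v_p(\mr)}\,\E_{\inc}^{(z)}(\ma_p,\mr')$ reduces everything to two single sums over antennas. These are the natural ``squared'' and ``diagonal--correction'' pieces that will give rise, respectively, to the $(J_0(\kb|\mr-\mr'|)+\cdots)^2$ and $\frac{1}{N}(J_0(2\kb|\mr-\mr'|)+\cdots)$ terms in \eqref{Structure}.

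Next, I would apply the two-dimensional far-field expansion
\[\E_{\inc}^{(z)}(\ma_n,\mr) = \frac{i}{4}H_0^{(1)}(\kb|\ma_n-\mr|) \approx \frac{i}{4}\sqrt{\frac{2}{\pi \kb R}}\,e^{i(\kb R - \pi/4)}\,e^{-i\kb\vt_n\cdot\mr},\]
which is justified by the hypothesis $|\kb(\ma_n-\mr)|\geq 0.25$ (so that the leading asymptotic of the Hankel function is accurate) together with $R\gg\mathrm{diam}(\Omega)$ for the linearisation $|\ma_n-\mr|\approx R-\vt_n\cdot\mr$. Let $C_0$ denote the common amplitude factor. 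Then $|\mW(\mr)|\approx\sqrt{N}|C_0|$, so $\overline{v_n(\mr)}\approx \tfrac{\overline{C_0}}{\sqrt{N}|C_0|}e^{i\kb\vt_n\cdot\mr}$, and the two single sums from Step~1 simplify to constants times $\sum_{n=1}^N e^{i\kb\vt_n\cdot(\mr-\mr')}$ and $\sum_{n=1}^N e^{i 2\kb\vt_n\cdot(\mr-\mr')}$. Using the polar parametrisation of the theorem, $\vt_n\cdot(\mr-\mr') = |\mr-\mr'|\cos(\theta_n-\phi)$, the Jacobi--Anger expansion $e^{ix\cos\alpha}=\sum_{s}i^s J_s(x)e^{is\alpha}$ isolates the $s=0$ mode, whose contribution over $n$ is $N J_0(\kb|\mr-\mr'|)$ (respectively $N J_0(2\kb|\mr-\mr'|)$), from the residual modes $s\neq 0$ which are exactly the $\mathcal{E}$ terms summed over $n$.

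Finally, I would collect constants: $|C_0|^2 = 1/(8\pi\kb R)$ together with $k_0^2=\omega^2\epsb\mub$ converts $\omega\mub$ into $k_0^2/(\omega\epsb)$, so the overall prefactor $ik_0^2 N|C_0|^2/(4\omega\mub)$ in front of the integral collapses, up to the unit-modulus factor $i$, to $N\omega\epsb/(32\pi\kb R)$; combined with $\mathcal{O}(\mr',t)$ this is precisely $\mathcal{C}(\mr',t)$, and the absolute value in \eqref{ImagingFunction} absorbs the stray $i$. The main obstacle, in my view, is the bookkeeping in this last step: faithfully tracking complex phases, the $N$ introduced through the normalisation of $\mV$, and the interconversion among $k_0$, $\kb$, $\omega$, $\epsb$, $\mub$ so that the theorem's prefactor appears exactly. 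The analytic content (far-field Hankel asymptotic plus Jacobi--Anger) is classical; everything else is careful algebra, and the uniformity of the far-field linearisation in $\mr\in\Omega$ should be noted as an implicit standing assumption.
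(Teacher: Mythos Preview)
Your proposal is correct and follows essentially the same route as the paper: far-field Hankel asymptotic for $\E_{\inc}^{(z)}$, the off-diagonal decomposition into a squared single sum minus a diagonal correction, and the Jacobi--Anger expansion to extract $J_0$ plus the $\mathcal{E}$-residual. The only cosmetic difference is that you invoke the identity $\sum_{p\neq q}a_p a_q=(\sum_p a_p)^2-\sum_p a_p^2$ at the outset, whereas the paper first forms $\overline{\mV(\mr)}^{\mathtt{T}}\mathbb{G}(t)$ as a column vector with entries containing $\sum_{n\neq p}e^{i\kb\vt_n\cdot(\mr-\mr')}=\sum_n(\cdots)-e^{i\kb\vt_p\cdot(\mr-\mr')}$ and then contracts against $\overline{\mV(\mr)}$; the algebra and the resulting constants coincide.
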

\begin{proof}
Since $|\kb(\ma_n-\mr)|\geq0.25$ for all $n=1,2,\cdots,N$, the following asymptotic form of the Hankel function holds (see \cite[Theorem 2.5]{CK}, for instance)
\begin{equation}\label{AsymptoticHankel}
H_0^{(1)}(\kb|\mr-\mr'|)\approx\frac{(1-i)e^{i\kb|\mr|}}{\sqrt{\kb\pi|\ma_n|}}e^{-i\kb\vt_n\cdot\mr'}.
\end{equation}
Then, since
\[\mV(\mr)\approx\frac{1}{\sqrt{N}}\bigg[e^{-i\kb\vt_1\cdot\mr},e^{-i\kb\vt_2\cdot\mr},\ldots,e^{-i\kb\vt_N\cdot\mr}\bigg]^{\mathtt{T}}\]
and
\[\mathbb{G}(t)\approx C\begin{bmatrix}
\medskip0&\displaystyle\int_{D(t)}\mathcal{O}(\mr',t)e^{-i\kb(\vt_1+\vt_2)\cdot\mr'}d\mr'&\cdots&\displaystyle\int_{D(t)}\mathcal{O}(\mr',t)e^{-i\kb(\vt_1+\vt_N)\cdot\mr'}d\mr'\\
\displaystyle\int_{D(t)}\mathcal{O}(\mr',t)e^{-i\kb(\vt_2+\vt_1)\cdot\mr'}d\mr'&0&\cdots&\displaystyle\int_{D(t)}\mathcal{O}(\mr',t)e^{-i\kb(\vt_2+\vt_N)\cdot\mr'}d\mr'\\
\vdots&\vdots&\ddots&\vdots\\
\displaystyle\int_{D(t)}\mathcal{O}(\mr',t)e^{-i\kb(\vt_N+\vt_1)\cdot\mr'}d\mr'&\displaystyle\int_{D(t)}\mathcal{O}(\mr',t)e^{-i\kb(\vt_N+\vt_2)\cdot\mr'}d\mr'&\cdots&0\\
\end{bmatrix},\]
we can derive
\[\overline{\mV(\mr)}^{\mathtt{T}}\mathbb{G}(t)\approx\frac{C}{\sqrt{N}}\begin{bmatrix}
\smallskip \displaystyle\int_{D(t)}\mathcal{O}(\mr',t)e^{-i\kb\vt_1\cdot\mr'}\left(\sum_{n\in\mathcal{N}_1}e^{i\kb\vt_n\cdot(\mr-\mr')}\right)d\mr'\\
\displaystyle\int_{D(t)}\mathcal{O}(\mr',t)e^{-i\kb\vt_2\cdot\mr'}\left(\sum_{n\in\mathcal{N}_2}e^{i\kb\vt_n\cdot(\mr-\mr')}\right)d\mr'\\
\vdots\\
\displaystyle\int_{D(t)}\mathcal{O}(\mr',t)e^{-i\kb\vt_N\cdot\mr'}\left(\sum_{n\in\mathcal{N}_N}e^{i\kb\vt_n\cdot(\mr-\mr')}\right)d\mr'
\end{bmatrix},\]
where $\mathcal{N}_p=\set{1,2,\cdots,N}\backslash\set{p}$ and $C=(e^{2iR\kb}\omega\epsb)/(32R\kb\pi)$.

Since $\vt_n\cdot(\mr-\mr')=|\mr-\mr'|\cos(\theta_n-\phi)$ and the following Jacobi-Anger expansion formula holds uniformly
\begin{equation}\label{JacobiAnger}
e^{ix\cos\theta}=J_0(x)+\sum_{s=-\infty,s\ne0}^{\infty}i^s J_{s}(x)e^{is\theta},
\end{equation}
we can evaluate
\begin{align*}
\sum_{n=1}^{N}e^{i\kb\vt_n\cdot(\mr-\mr')}&=\sum_{n=1}^{N}\left(J_0(\kb|\mr-\mr'|)+\sum_{s=-\infty,s\ne0}^{\infty}i^s J_{s}(\kb|\mr-\mr'|)e^{is(\theta_n-\phi)}\right)\\
&=NJ_0(\kb|\mr-\mr'|)+\sum_{n=1}^{N}\mathcal{E}(\kb|\mr-\mr'|,t)
\end{align*}
and correspondingly,
\begin{multline*}
\overline{\mV(\mr)}^{\mathtt{T}}\mathbb{G}(t)\overline{\mV(\mr)}\approx\frac{C}{N}\begin{bmatrix}
\smallskip \displaystyle\int_{D(t)}\mathcal{O}(\mr',t)e^{-i\kb\vt_1\cdot\mr'}\left(\sum_{n=1}^{N}e^{i\kb\vt_n\cdot(\mr-\mr')}-e^{i\kb\vt_1\cdot(\mr-\mr')}\right)d\mr'\\
\displaystyle\int_{D(t)}\mathcal{O}(\mr',t)e^{-i\kb\vt_2\cdot\mr'}\left(\sum_{n=1}^{N}e^{i\kb\vt_n\cdot(\mr-\mr')}-e^{i\kb\vt_2\cdot(\mr-\mr')}\right)d\mr'\\
\vdots\\
\displaystyle\int_{D(t)}\mathcal{O}(\mr',t)e^{-i\kb\vt_N\cdot\mr'}\left(\sum_{n=1}^{N}e^{i\kb\vt_n\cdot(\mr-\mr')}-e^{i\kb\vt_N\cdot(\mr-\mr')}\right)d\mr'
\end{bmatrix}
\begin{bmatrix}
\smallskip e^{i\kb\vt_1\cdot\mr}\\
e^{i\kb\vt_2\cdot\mr}\\
\vdots\\
e^{i\kb\vt_N\cdot\mr}
\end{bmatrix}\\
=\frac{C}{N}\begin{bmatrix}
\smallskip \displaystyle\int_{D(t)}\mathcal{O}(\mr',t)e^{-i\kb\vt_1\cdot\mr'}\left(NJ_0(\kb|\mr-\mr'|)+\sum_{n=1}^{N}\mathcal{E}(\kb|\mr-\mr'|,t)-e^{i\kb\vt_1\cdot(\mr-\mr')}\right)d\mr'\\
\displaystyle\int_{D(t)}\mathcal{O}(\mr',t)e^{-i\kb\vt_2\cdot\mr'}\left(NJ_0(\kb|\mr-\mr'|)+\sum_{n=1}^{N}\mathcal{E}(\kb|\mr-\mr'|,t)-e^{i\kb\vt_2\cdot(\mr-\mr')}\right)d\mr'\\
\vdots\\
\displaystyle\int_{D(t)}\mathcal{O}(\mr',t)e^{-i\kb\vt_N\cdot\mr'}\left(NJ_0(\kb|\mr-\mr'|)+\sum_{n=1}^{N}\mathcal{E}(\kb|\mr-\mr'|,t)-e^{i\kb\vt_N\cdot(\mr-\mr')}\right)d\mr'
\end{bmatrix}
\begin{bmatrix}
\smallskip e^{i\kb\vt_1\cdot\mr}\\
e^{i\kb\vt_2\cdot\mr}\\
\vdots\\
e^{i\kb\vt_N\cdot\mr}
\end{bmatrix}.
\end{multline*}

Now, applying \eqref{JacobiAnger} again, we can evaluate
\[\sum_{n'=1}^{N}e^{i\kb\vt_{n'}\cdot(\mr-\mr')}\left(NJ_0(\kb|\mr-\mr'|)+\sum_{n=1}^{N}\mathcal{E}(\kb|\mr-\mr'|,t)\right)=\left(NJ_0(\kb|\mr-\mr'|)+\sum_{n=1}^{N}\mathcal{E}(\kb|\mr-\mr'|,t)\right)^2\]
and
\[\sum_{n'=1}^{N}e^{2i\kb\vt_{n'}\cdot(\mr-\mr')}=NJ_0(2\kb|\mr-\mr'|)+\sum_{n=1}^{N}\mathcal{E}(2\kb|\mr-\mr'|,t).\]
Then,
\begin{align*}
\overline{\mV(\mr)}^{\mathtt{T}}\mathbb{G}(t)\overline{\mV(\mr)}\approx&\frac{C}{N}\int_{D(t)}\mathcal{O}(\mr',t)\sum_{n'=1}^{N}e^{i\kb\vt_{n'}\cdot(\mr-\mr')}\left(NJ_0(\kb|\mr-\mr'|)+\sum_{n=1}^{N}\mathcal{E}(\kb|\mr-\mr'|,t)\right)d\mr'\\
&-\frac{C}{N}\int_{D(t)}\mathcal{O}(\mr',t)\sum_{n'=1}^{N}e^{2i\kb\vt_{n'}\cdot(\mr-\mr')}d\mr'\\
=&\int_{D(t)}\mathcal{C}(\mr',t)\left(J_0(\kb|\mr-\mr'|)+\frac{1}{N}\sum_{n=1}^{N}\mathcal{E}(\kb|\mr-\mr'|,t)\right)^2d\mr'\\
&-\frac{1}{N}\int_{D(t)}\mathcal{C}(\mr',t)\left(J_0(2\kb|\mr-\mr'|)+\frac{1}{N}\sum_{n=1}^{N}\mathcal{E}(2\kb|\mr-\mr'|,t)\right)d\mr'.
\end{align*}
Finally, by taking the absolute value, we can obtain the result \eqref{Structure}. This completes the proof.
\end{proof}

\subsection{Various properties of imaging function}\label{sec:3.2}
On the basis of the result \eqref{Structure}, we can explore some properties of imaging function as follows.
\begin{property}[Availability of detection]\label{Property1}
Since $J_0(0)=1$ and $J_s(0)=0$ for $s\ne0$, map of $\mathfrak{F}(\mr,t)$ will contain peaks of large magnitudes when $\mr=\mr'\in D(t)$. Hence, the locations of the moving objects can be imaged via the map of $\mathfrak{F}(\mr,t)$. Furthermore, owing to the oscillating property of the Bessel functions, some artifacts will be included on the map of $\mathfrak{F}(\mr,t)$.
\end{property}

\begin{property}[Dependence of the material properties]\label{Property2}
If $\mr=\mr'\in D_m(t)$ then since $J_0(\kb|\mr-\mr'|)=1$ and $\mathcal{E}(\kb|\mr-\mr'|,t)=\mathcal{E}(2\kb|\mr-\mr'|,t)=0$, we have
\[\mathfrak{F}(\mr,t)\approx\bigg|\int_{D_m(t)}\mathcal{C}(\mr',t)d\mr'-\frac{1}{N}\int_{D_m(t)}\mathcal{C}(\mr',t)d\mr'\bigg|=\frac{(N-1)\omega\epsb}{32|\kb|R\pi}\bigg|\frac{\eps_m-\epsb}{\epsb}+i\frac{\sigma_m-\sigmab}{\omega\epsb}\bigg|\text{area}(D_m(t)).\]
Hence, we can conclude that the value of $\mathfrak{F}(\mr,t)$ significantly depends on the size, permittivity, and conductivity of the object. This means that if the size, permittivity, and conductivity of an object $D_m(t)$ is considerably larger than those of the others, peaks of large magnitude will appear on the map of $\mathfrak{F}(\mr,t)$ at the location of this object because $|\mathcal{C}(\mr,t)|>|\mathcal{C}(\mr',t)|$ for $\mr\in D_m(t)$ and $\mr'\in D(t)\backslash\overline{D_m(t)}$.\end{property}

\begin{property}[Ideal conditions for a proper detection]\label{Property3}
Based on the structure of the factor $\mathcal{E}(\kb|\mr-\mr'|,t)$, it disturbs the detection of objects. Thus, eliminating the factors $\mathcal{E}(\kb|\mr-\mr'|,t)$ and $\mathcal{E}(2\kb|\mr-\mr'|,t)$ will guarantee good results. Notice that if one can increase $N$ as much as possible ($N\longrightarrow+\infty$) or apply extremely high frequency ($\omega\longrightarrow+\infty$), then their effects can be reduced but this is inappropriate for real-world applications.\end{property}

\begin{property}[Practical condition for a proper detection]\label{Property4}
To reduce the adverse effect of $\mathcal{E}(\kb|\mr-\mr'|,t)$, a condition must be found to satisfy the following equation:
\[\sum_{n=1}^{N}\sum_{s=-\infty,s\ne0}^{\infty}i^s J_{s}(\kb|\mr-\mr'|)e^{is(\theta_n-\phi)}=0.\]
Since we have no a priori information of objects, $i^sJ_{s}(\kb|\mr-\mr'|)$ cannot be eliminated. This means that we must find a condition $\theta_n$ such that
\[\sum_{s=-\infty,s\ne0}^{\infty}\sum_{n=1}^{N}e^{is(\theta_n-\phi)}=\sum_{s=-\infty,s\ne0}^{\infty}\sum_{n=1}^{N}\cos\big(s(\theta_n-\phi)\big)+i\sin\big(s(\theta_n-\phi)\big)=0.\]
Based on the periodic property of the cosine and sine functions, the effect of $\mathcal{E}(\kb|\mr-\mr'|,t)$ can be reduced when even number of antennas are uniformly distributed on a circular array and total number of antennas $N$ is greater than $8$. This means that the array configuration of the antennas affects the imaging quality and this is the theoretical reason for the even number of antennas and their symmetric location with respect to the origin in general. We refer to \cite{KCP1,P-TD5,P-SUB16} for a similar phenomenon in various imaging techniques.
\end{property}

\begin{property}[On the imaging of objects close to an antenna]\label{Property5}
On the imaging of objects, the distance between each objects or between object and antennas significantly influences the imaging performance because the integral equation formula \eqref{Sparameter} holds for well-separated objects and Theorem \ref{TheoremStructure} holds when $|\kb(\ma_n-\mr)|\geq0.25$. If an object $D_m(t)$ is very close to an antenna $\mA_n$ at time $t$ such that
\[0<|\ma_n-\mr_m(t)|\ll\frac{1}{|\kb|}\]
then based on the asymptotic form of the Hankel function
\[H_0^{(1)}(\kb|\ma_n-\mr'|)\approx 1+\frac{2i}{\pi}\left\{\ln\left(\frac{|\kb(\ma_n-\mr')|}{2}\right)+\gamma\right\}\longrightarrow\text{blow up}\quad\text{as}\quad\mr'\longrightarrow\ma_n,\]
some elements of $\mathbb{G}(t)$ will be very large valued so that unexpected imaging results (appearance of several artifacts with large magnitudes, invisible of objects, etc.) will be obtained. Here, $\gamma=0.57721\ldots$ denotes the Euler–Mascheroni constant.
\end{property}

Finally, based on the Property \ref{Property1}, we can derive following important result of the unique determination.

\begin{corollary}[Unique determination of moving objects]
For given angular frequency $\omega$, the moving objects $D_m(t)$ at each time $t$ can be detected uniquely through the map of $\mathfrak{F}(\mr,t)$ with the same condition of Theorem \ref{TheoremStructure}.
\end{corollary}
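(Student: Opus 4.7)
The plan is to reduce the unique-determination claim to a direct consequence of Theorem \ref{TheoremStructure} combined with Property \ref{Property1}. First I would fix a time $t$ and observe that the scattering matrix $\mathbb{G}(t)$ is completely determined by the measured $S$-parameters, so the function $\mathfrak{F}(\mr,t)$ defined by \eqref{ImagingFunction} is a deterministic map on $\Omega$. Hence uniqueness of detection reduces to showing that the set of peak locations of $\mr\mapsto\mathfrak{F}(\mr,t)$ coincides with $D(t)$.

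Next I would insert the representation \eqref{Structure} and analyze the behavior at a candidate point $\mr\in\Omega$. When $\mr=\mr'\in D_m(t)$, the identities $J_0(0)=1$ and $J_s(0)=0$ for $s\ne 0$ force both $\mathcal{E}(\kb|\mr-\mr'|,t)$ and $\mathcal{E}(2\kb|\mr-\mr'|,t)$ to vanish, and the integrand collapses to the closed-form expression computed in Property \ref{Property2}. This peak value is nonzero precisely because of the material-contrast hypothesis, so every point of $D(t)$ produces a local maximum of the imaging map. Conversely, for $\mr\notin D(t)$ the factors $J_0(\kb|\mr-\mr'|)$ and $J_0(2\kb|\mr-\mr'|)$ are strictly less than one in modulus and oscillate in $|\mr-\mr'|$, so the integral over $D(t)$ is strictly smaller in magnitude than the on-object value. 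Since the hypothesis $|\kb(\ma_n-\mr)|\geq 0.25$ is inherited from Theorem \ref{TheoremStructure}, the asymptotic form \eqref{AsymptoticHankel} used to derive \eqref{Structure} remains valid uniformly on $\Omega$.

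Finally, running this argument at every time $t$ yields the real-time statement: at each instant, the set $D(t)$ is recovered unambiguously from the peaks of $\mathfrak{F}(\cdot,t)$, and therefore the trajectory of each moving object is uniquely determined by the data.

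The main obstacle in making this rigorous is controlling the oscillatory residuals $\mathcal{E}(\kb|\mr-\mr'|,t)$ for $\mr\ne\mr'$, which can in principle generate spurious secondary maxima that contaminate the identification of $D(t)$. Property \ref{Property4} pins down the antenna-configuration condition under which these residual sums cancel at leading order, namely an even number $N>8$ of antennas uniformly placed on a circle; under this configuration one can bound the contribution of $\mathcal{E}$ uniformly in $\mr$ and conclude that the genuine peaks of $\mathfrak{F}(\cdot,t)$ isolate $D(t)$. This bound, together with Property \ref{Property2} for the peak amplitude, supplies the quantitative gap between on-object and off-object values that makes unique determination meaningful in practice.
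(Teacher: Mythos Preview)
Your approach is correct and matches the paper's: the corollary is presented there simply as an immediate consequence of Property~\ref{Property1} (itself resting on Theorem~\ref{TheoremStructure}), with no further argument supplied. Your elaboration via Properties~\ref{Property2} and~\ref{Property4} to quantify the gap between on-object and off-object values goes well beyond what the paper actually offers, but the underlying reduction is identical.
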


\section{Simulation results with experimental data}\label{sec:4}
In this section, various numerical simulation results with  experimental data are shown for demonstrating the feasibility of the Kirchhoff migration and supporting the theoretical result. For the simulation, $N=16$ dipole antennas equally distributed on a circle with a diameter of $\SI{0.18}{\m}$ are placed into a cylindrical tank with a height of $\SI{0.3}{\m}$ and a diameter of $\SI{0.2}{\m}$ to satisfy the condition in Property \ref{Property4}. The tank was filled with water such that $(\epsb,\sigmab)=(78\eps_0,\SI{0.2}{\siemens/\m})$ at $f=\SI{925}{\MHz}$, where $\eps_0=\SI{8.854e-12}{\farad/\meter}$ denotes the vacuum permittivity. Under the current simulation configuration, since $\kb\approx171.27- 4.26i$ (i.e., $|\kb|\approx171.3237$), we set the ROI as a circle centered at the origin with radius of $\SI{0.085}{\m}$ to satisfy the condition $|\kb(\ma_n-\mr)|\geq0.25$ for all $n$, refer to the condition of Theorem \ref{TheoremStructure} and discussion of Property \ref{Property5}. Throughout this paper, the elements $S_{\scat}(p,q,t)$ of $\mathbb{G}(t)$ are generated using a microwave machine manufactured by Electronics and Telecommunications Research Institute (ETRI), refer to \cite{KLKJS}. For describing objects, the cross-section of four long objects $D_m$, $m=1,2,3,4$, are chosen. Table \ref{Materials} presents the material properties of each object and Figure \ref{Configuration} exhibits the manufactured microwave machine, antenna arrangements, and selected objects.

\begin{table}[h]
\begin{center}
\begin{tabular}{c||c|c|c}
\hline \centering Object (cross-section)&~Permittivity~&Conductivity ($\SI{}{\siemens/\m}$)&Diameter ($\SI{}{\mm}$)\\
\hline\hline \centering $D_1(t)$: plastic bar&$3.0\eps_0$ (approximately)&$0$ (approximately)&$20.00$\\
\hline \centering $D_2(t)$: steel bar\phantom{pi}&$-$&$\infty$&$\phantom{2}6.40$\\
\hline \centering $D_3(t)$: steel bar\phantom{pi}&$-$&$\infty$&$\phantom{2}6.55$\\
\hline \centering $D_4(t)$: plastic bar&$2.5\eps_0$ (approximately)&$0$ (approximately)&$\phantom{2}6.40$\\
\hline
\end{tabular}
\centering\caption{\label{Materials}Values of permittivities, conductivities, and sizes of objects.}
\end{center}
\end{table}

\begin{figure}[h]
\begin{center}
\includegraphics[width=.99\textwidth]{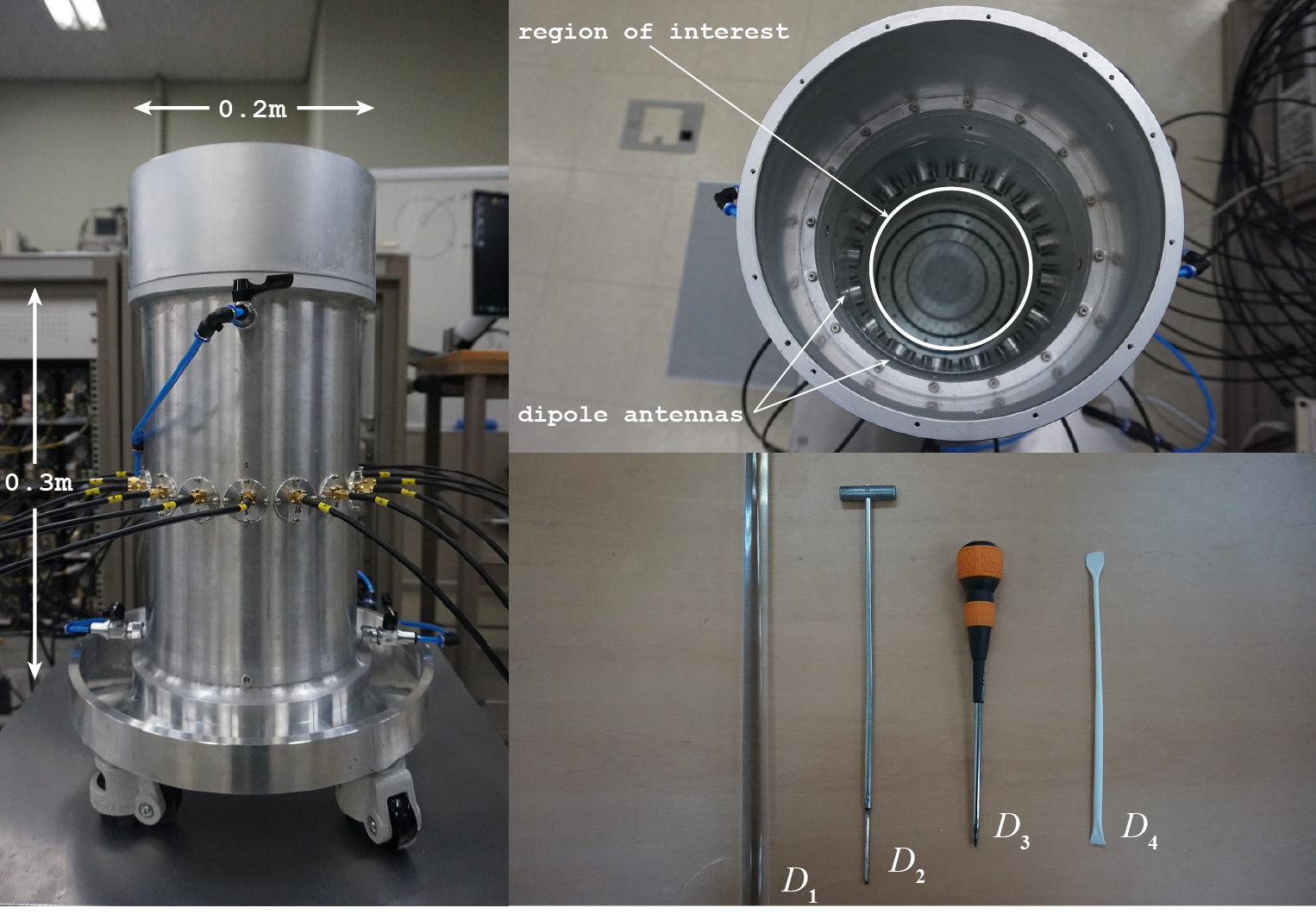}
\caption{\label{Configuration}Photos of microwave machine and objects $D_m$, $m=1,2,3,4$.}
\end{center}
\end{figure}

\begin{example}[Tracking of single moving object]\label{Ex1}
Figure \ref{Result-Single} shows the maps of $\mathfrak{F}(\mr,t)$ for a single, moving object $D_3(t)$. Although some artifacts are included on the map, it is possible to track the moving object. Note that at some time $t$ (e.g., $t=\SI{4.5}{\s}$, $\SI{5.0}{\s}$, $\SI{10.5}{\s}$, etc.), the artifacts were eliminated. We cannot explain the exact reason of this phenomenon, but we believe that at that moments, $\mathcal{E}(\kb|\mr-\mr'|,t)\approx0$ of \eqref{Structure}.
\end{example}

\begin{figure}[h]
\begin{center}
\includegraphics[width=\textwidth]{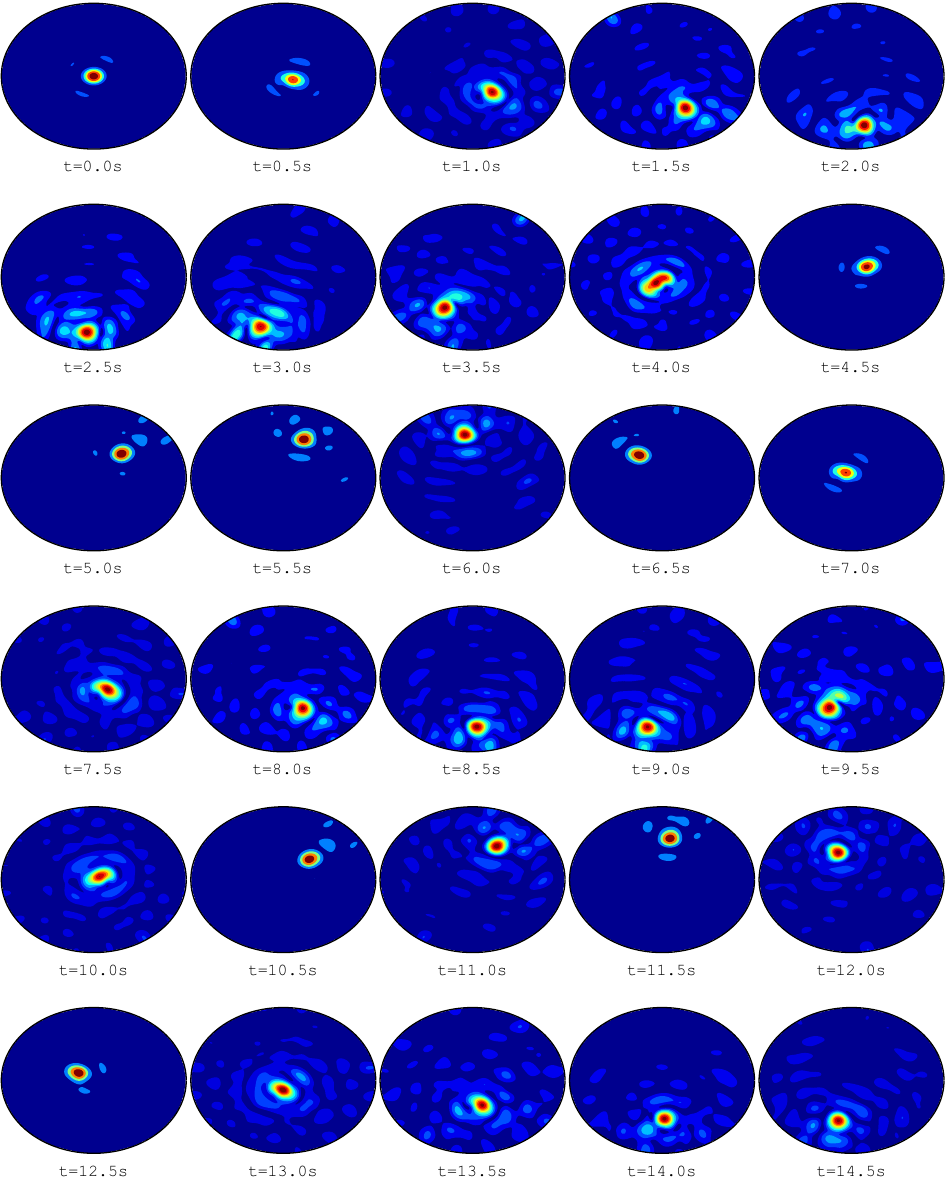}
\caption{\label{Result-Single}(Example \ref{Ex1}) Maps of $\mathfrak{F}(\mr,t)$ for moving object $D_3(t)$.}
\end{center}
\end{figure}

\begin{example}[Tracking of moving objects: same radii and material properties]\label{Ex2}
Figure \ref{Result-Multiple} shows the maps of $\mathfrak{F}(\mr,t)$ for two moving objects $D_2(t)$ and $D_3(t)$ with almost the same radii and material properties, i.e., $|\mathcal{C}(\mr_2,t)|\approx|\mathcal{C}(\mr_3,t)|$ for $\mr_2\in D_2(t)$ and $\mr_3\in D_3(t)$. Similar to the result in Example \ref{Ex1}, the track of the moving objects can be recognized even though some artifacts degrade the imaging quality. Fortunately, in contrast to Example \ref{Ex1}, the artifacts do not disturb the recognition of the moving objects but at certain instances $t=\SI{1.5}{\s}$ and $t=\SI{2.5}{\s}$, significantly large amounts of artifacts appear. It is interesting to observe that although the material properties and sizes of two objects are the same, the magnitudes are different at certain instances. For example, by comparing the maps of $\mathfrak{F}(\mr,t)$ at $t=\SI{11.5}{s}$ and $t=\SI{12.0}{s}$, we can observe that there is no significant change of locations, but the magnitudes changed. Unfortunately, the truth behind this phenomenon cannot be estimated at present, but the distance between the object and antennas may possibly be the reason, refer to Property \ref{Property5}.
\end{example}

\begin{figure}[h]
\begin{center}
\includegraphics[width=\textwidth]{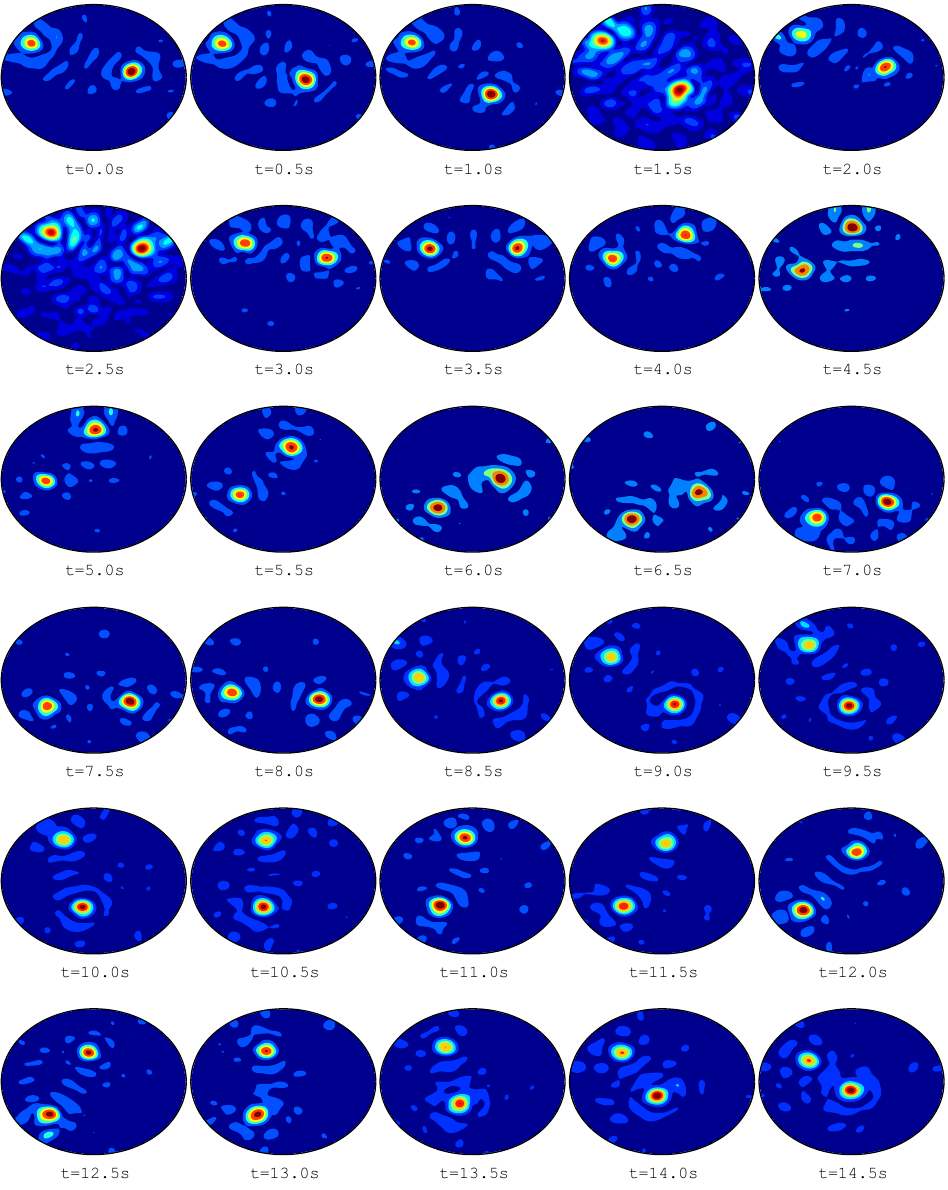}\caption{\label{Result-Multiple}(Example \ref{Ex2}) Maps of $\mathfrak{F}(\mr,t)$ for moving objects $D_2(t)$ and $D_3(t)$.}
\end{center}
\end{figure}

\begin{example}[Tracking of moving objects: different radii and material properties]
\label{Ex3}
Now, we consider the tracking of moving objects $D_1(t)$ and $D_2(t)$. On the basis of the result in Figure \ref{Result-Multiple-Different1}, we can observe that a peak of large magnitude appears when $\mr\in D_1(t)$ because the size of $D_1(t)$ is significantly larger than that of $D_2(t)$ i.e., $|\mathcal{C}(\mr_1,t)|>|\mathcal{C}(\mr_2,t)|$ for $\mr_1\in D_1(t)$ and $\mr_2\in D_2(t)$, refer to Property \ref{Property2}. Similar to the results in Example \ref{Ex2}, some artifacts are included on the map of $\mathfrak{F}(\mr,t)$; however, they were disappeared at certain instances, e.g., $t=\SI{1.0}{\s}$, $\SI{4.0}{\s}$, and $\SI{5.5}{\s}$.
\end{example}

\begin{figure}[h]
\begin{center}
\includegraphics[width=\textwidth]{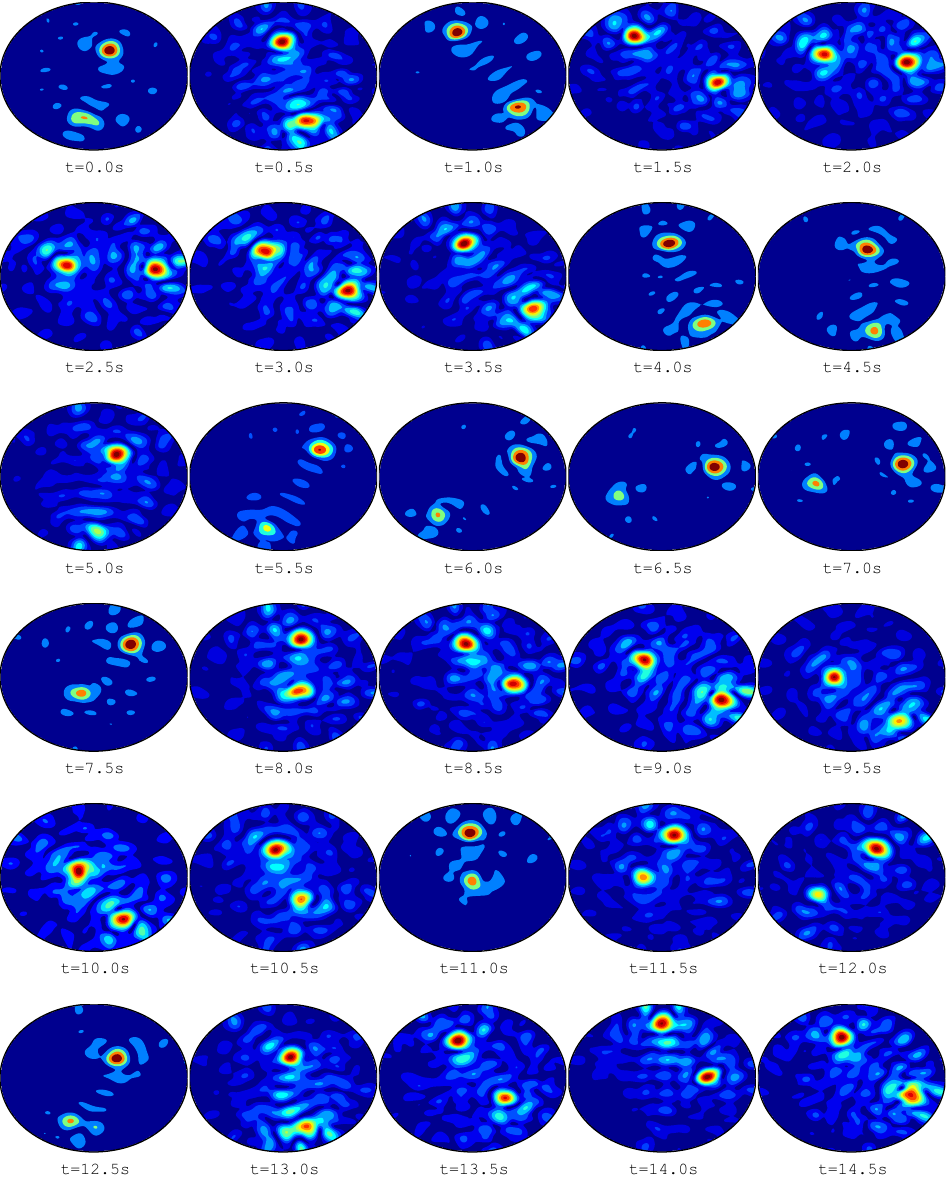}\caption{\label{Result-Multiple-Different1}(Example \ref{Ex3}) Maps of $\mathfrak{F}(\mr,t)$ for moving objects $D_1(t)$ and $D_2(t)$.}
\end{center}
\end{figure}

\begin{example}[Tracking of moving objects: same radii but different material properties]
\label{Ex4}
For the final example, we present the simulation result for moving objects $D_2(t)$ and $D_4(t)$ with the same size but different material properties such that $|\mathcal{C}(\mr_2,t)|\gg|\mathcal{C}(\mr_4,t)|$ for $\mr_2\in D_2(t)$ and $\mr_4\in D_4(t)$. Figure \ref{Result-Multiple-Different2} shows the maps of $\mathfrak{F}(\mr,t)$, and we can observe that it is impossible to recognize the movement of $D_4(t)$ because the value of $\mathfrak{F}(\mr,t)$ at $\mr\in D_4(t)$ is significantly smaller than that of $\mathfrak{F}(\mr,t)$ at $\mr\in D_2(t)$. It is interesting to observe that when an object moves quickly, few ghost replicas are exhibited on the map, e.g., two peaks of large magnitude appeared at $t=\SI{8.0}{\s}$ when $D_2(t)$ moved quickly.
\end{example}

\begin{figure}[h]
\begin{center}
\includegraphics[width=\textwidth]{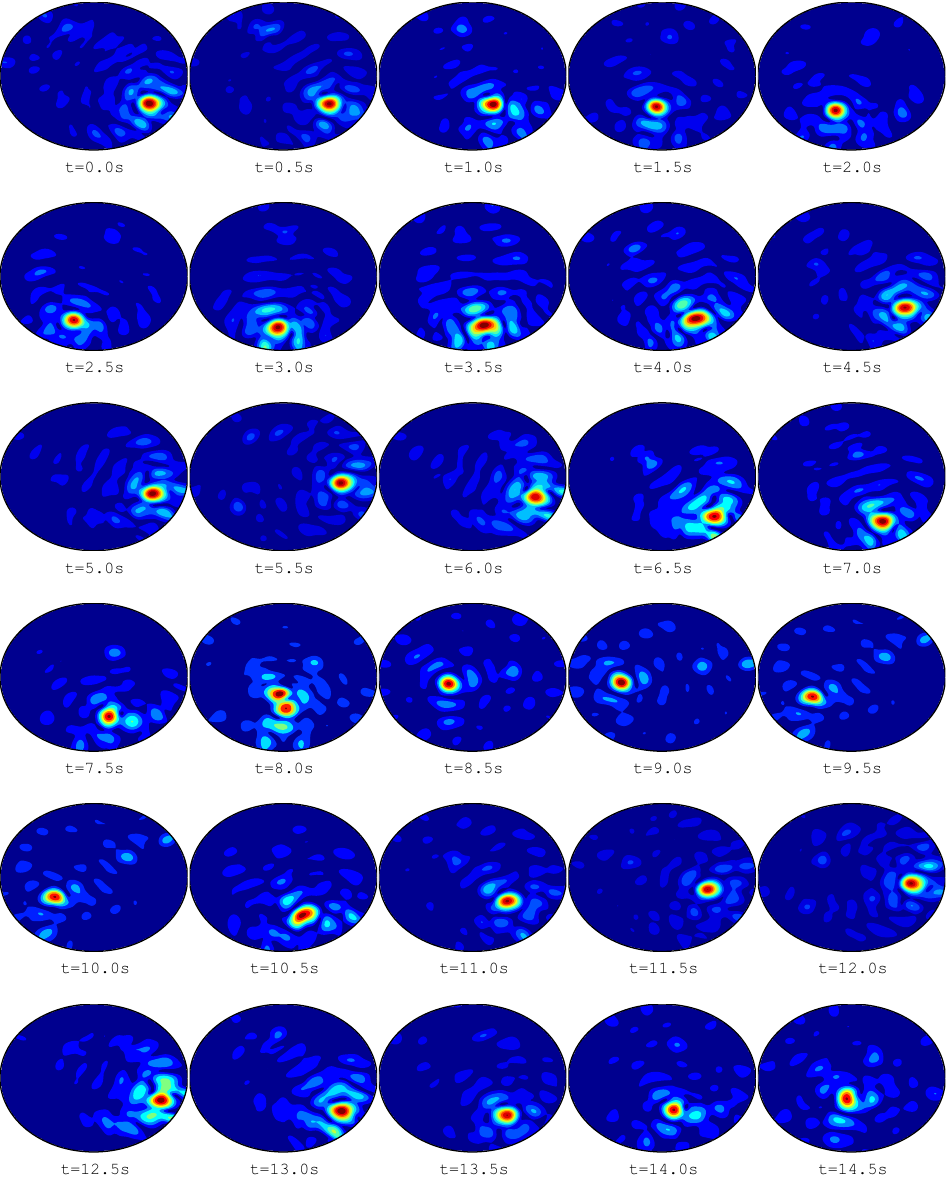}\caption{\label{Result-Multiple-Different2}(Example \ref{Ex4}) Maps of $\mathfrak{F}(\mr,t)$ for moving objects $D_2(t)$ and $D_4(t)$.}
\end{center}
\end{figure}

\section{Conclusion and perspectives}\label{sec:5}
Owing to the existence of small, moving objects and the non-iterative Kirchhoff migration technique in inverse scattering problems, we designed a real-time algorithm for imaging the moving objects on the basis of the representation formula of the scattering parameters. To examine the feasibility and explore some properties of the designed algorithm, we proved that the imaging function could be represented as the total and array configuration of the antennas and an infinite series of Bessel functions of integer order.

Numerical simulations were performed using experimental data generated by the ETRI to demonstrate that the Kirchhoff migration is very effective for a real-time detection of moving objects in microwave imaging. However, the algorithm's application is currently restricted to the detection of small objects; therefore, further applications to the detection of the movements of arbitrary shaped extended objects or the evolution of crack-like defects must be considered. Forthcoming studies will focus on designing appropriate imaging algorithms, performing related mathematical analyses, and conducting related simulations.

\section*{Acknowledgments}
This research was supported by the National Research Foundation of Korea (NRF) grant funded by the Korea government (MSIT) (NRF-2020R1A2C1A01005221) and the Soonchunhyang University research fund.


\begin{thebibliography}{52}
\providecommand{\natexlab}[1]{#1}
\providecommand{\url}[1]{\texttt{#1}}
\providecommand{\urlprefix}{URL }
\expandafter\ifx\csname urlstyle\endcsname\relax
  \providecommand{\doi}[1]{doi:\discretionary{}{}{}#1}\else
  \providecommand{\doi}[1]{doi:\discretionary{}{}{}\begingroup
  \urlstyle{rm}\url{#1}\endgroup}\fi
\providecommand{\bibinfo}[2]{#2}

\bibitem[{Atay et~al.(2016)Atay, Kaplan, Kilic, and Karapinar}]{AKKK}
\bibinfo{author}{T.~Atay}, \bibinfo{author}{M.~Kaplan},
  \bibinfo{author}{Y.~Kilic}, \bibinfo{author}{N.~Karapinar},
  \bibinfo{title}{A-{T}rack: {A} new approach for detection of moving objects
  in {FITS} images}, \bibinfo{journal}{Comput. Phys. Commun.}
  \bibinfo{volume}{207} (\bibinfo{year}{2016}) \bibinfo{pages}{524--530}.

\bibitem[{Bidon et~al.(2012)Bidon, Tourneret, and Savy}]{BTS}
\bibinfo{author}{S.~Bidon}, \bibinfo{author}{J.~Y. Tourneret},
  \bibinfo{author}{L.~Savy}, \bibinfo{title}{Sparse representation of migrating
  targets in low {PRF} wideband radar}, in: \bibinfo{booktitle}{2012 IEEE Radar
  Conference}, \bibinfo{pages}{0314--0319}, \bibinfo{year}{2012}.

\bibitem[{Cai et~al.(2006)Cai, Liu, Fu, and Lu}]{CLFL}
\bibinfo{author}{C.~Cai}, \bibinfo{author}{W.~Liu}, \bibinfo{author}{J.~S. Fu},
  \bibinfo{author}{Y.~Lu}, \bibinfo{title}{A new approach for ground moving
  target indication in foliage environment}, \bibinfo{journal}{Signal Process.}
  \bibinfo{volume}{86}~(\bibinfo{number}{1}) (\bibinfo{year}{2006})
  \bibinfo{pages}{84--97}.

\bibitem[{Fienup(2001)}]{F2}
\bibinfo{author}{J.~R. Fienup}, \bibinfo{title}{Detecting moving targets in
  {SAR} imagery by focusing}, \bibinfo{journal}{IEEE Trans. Aero. Electr.
  Syst.} \bibinfo{volume}{37} (\bibinfo{year}{2001}) \bibinfo{pages}{794--809}.

\bibitem[{Moreira and Keydel(1995)}]{MK1}
\bibinfo{author}{J.~R. Moreira}, \bibinfo{author}{W.~Keydel}, \bibinfo{title}{A
  new {MIT}-{SAR} approach using the reflectivity displacement method},
  \bibinfo{journal}{IEEE. Trans. Geosci. Remote Sens.}
  \bibinfo{volume}{33}~(\bibinfo{number}{5}) (\bibinfo{year}{1995})
  \bibinfo{pages}{1238--1244}.

\bibitem[{{\"O}nhon and \c{C}etin(2017)}]{OC}
\bibinfo{author}{N.~O. {\"O}nhon}, \bibinfo{author}{M.~\c{C}etin},
  \bibinfo{title}{{SAR} moving object imaging using sparsity imposing priors},
  \bibinfo{journal}{EURASIP J. Adv. Signal Process.}  (\bibinfo{year}{2017})
  \bibinfo{pages}{2017:10}.

\bibitem[{Stojanovic and Karl(2010)}]{SK}
\bibinfo{author}{I.~Stojanovic}, \bibinfo{author}{W.~C. Karl},
  \bibinfo{title}{Imaging of moving targets with multi-static {SAR} using an
  overcomplete dictionary}, \bibinfo{journal}{IEEE J. Sel. Topics Signal
  Process} \bibinfo{volume}{4}~(\bibinfo{number}{1}) (\bibinfo{year}{2010})
  \bibinfo{pages}{164--176}.

\bibitem[{Dorn and Lesselier(2006)}]{DL}
\bibinfo{author}{O.~Dorn}, \bibinfo{author}{D.~Lesselier},
  \bibinfo{title}{Level set methods for inverse scattering},
  \bibinfo{journal}{Inverse Probl.} \bibinfo{volume}{22} (\bibinfo{year}{2006})
  \bibinfo{pages}{R67--R131}.

\bibitem[{Haynes et~al.(2012)Haynes, Stang, and Moghaddam}]{HSM1}
\bibinfo{author}{M.~Haynes}, \bibinfo{author}{J.~Stang},
  \bibinfo{author}{M.~Moghaddam}, \bibinfo{title}{Microwave breast imaging
  system prototype with integrated numerical characterization},
  \bibinfo{journal}{J. Biomed. Imag.} \bibinfo{volume}{2012}
  (\bibinfo{year}{2012}) \bibinfo{pages}{1--18}.

\bibitem[{Irishina et~al.(2009)Irishina, Moscoso, and Dorn}]{IMD}
\bibinfo{author}{N.~Irishina}, \bibinfo{author}{M.~Moscoso},
  \bibinfo{author}{O.~Dorn}, \bibinfo{title}{Microwave imaging for early breast
  cancer detection using a shape-based strategy}, \bibinfo{journal}{IEEE Trans.
  Biomed. Eng.} \bibinfo{volume}{56} (\bibinfo{year}{2009})
  \bibinfo{pages}{1143--1153}.

\bibitem[{Kress(1995)}]{K}
\bibinfo{author}{R.~Kress}, \bibinfo{title}{Inverse scattering from an open
  arc}, \bibinfo{journal}{Math. Meth. Appl. Sci.} \bibinfo{volume}{18}
  (\bibinfo{year}{1995}) \bibinfo{pages}{267--293}.

\bibitem[{Rub{\ae}k et~al.(2007)Rub{\ae}k, Meaney, Meincke, and Paulsen}]{RMMP}
\bibinfo{author}{T.~Rub{\ae}k}, \bibinfo{author}{P.~M. Meaney},
  \bibinfo{author}{P.~Meincke}, \bibinfo{author}{K.~D. Paulsen},
  \bibinfo{title}{Nonlinear microwave imaging for breast-cancer screening using
  {G}auss--{N}ewton's method and the {CGLS} inversion algorithm},
  \bibinfo{journal}{IEEE Trans. Antennas Propag.}
  \bibinfo{volume}{55}~(\bibinfo{number}{8}) (\bibinfo{year}{2007})
  \bibinfo{pages}{2320--2331}.

\bibitem[{Santosa(1996)}]{S1}
\bibinfo{author}{F.~Santosa}, \bibinfo{title}{A level-set approach for inverse
  problems involving obstacles}, \bibinfo{journal}{ESAIM: Control Optim. Calc.
  Var.} \bibinfo{volume}{1} (\bibinfo{year}{1996}) \bibinfo{pages}{17--33}.

\bibitem[{Ventura et~al.(2002)Ventura, Xu, and Belytschko}]{VXB}
\bibinfo{author}{G.~Ventura}, \bibinfo{author}{J.~X. Xu},
  \bibinfo{author}{T.~Belytschko}, \bibinfo{title}{A vector level set method
  and new discontinuity approximations for crack growth by {EFG}},
  \bibinfo{journal}{Int. J. Numer. Meth. Engng.} \bibinfo{volume}{54}
  (\bibinfo{year}{2002}) \bibinfo{pages}{923--944}.

\bibitem[{Jofre et~al.(2009)Jofre, Broquetas, Romeu, Blanch, Toda, Fabregas,
  and Cardama}]{JBRBTFC}
\bibinfo{author}{L.~Jofre}, \bibinfo{author}{A.~Broquetas},
  \bibinfo{author}{J.~Romeu}, \bibinfo{author}{S.~Blanch},
  \bibinfo{author}{A.~P. Toda}, \bibinfo{author}{X.~Fabregas},
  \bibinfo{author}{A.~Cardama}, \bibinfo{title}{{UWB} tomographic radar imaging
  of penetrable and impenetrable objects}, \bibinfo{journal}{Proc. IEEE}
  \bibinfo{volume}{97}~(\bibinfo{number}{2}) (\bibinfo{year}{2009})
  \bibinfo{pages}{451--464}.

\bibitem[{Kang and Park(2023)}]{KP4}
\bibinfo{author}{S.~Kang}, \bibinfo{author}{W.-K. Park}, \bibinfo{title}{A
  novel study on the bifocusing method in two-dimensional inverse scattering
  problem}, \bibinfo{journal}{AIMS Math.}
  \bibinfo{volume}{8}~(\bibinfo{number}{11}) (\bibinfo{year}{2023})
  \bibinfo{pages}{27080--27112}.

\bibitem[{Kang et~al.(2023)Kang, Park, and Son}]{KPS}
\bibinfo{author}{S.~Kang}, \bibinfo{author}{W.-K. Park}, \bibinfo{author}{S.-H.
  Son}, \bibinfo{title}{A qualitative analysis of the bifocusing method for a
  real-time anomaly detection in microwave imaging}, \bibinfo{journal}{Comput.
  Math. Appl.} \bibinfo{volume}{137}~(\bibinfo{number}{1})
  (\bibinfo{year}{2023}) \bibinfo{pages}{93--101}.

\bibitem[{Son and Park(2023)}]{SP2}
\bibinfo{author}{S.-H. Son}, \bibinfo{author}{W.-K. Park},
  \bibinfo{title}{Application of the bifocusing method in microwave imaging
  without background information}, \bibinfo{journal}{J. Korean Soc. Ind. Appl.
  Math.} \bibinfo{volume}{27}~(\bibinfo{number}{2}) (\bibinfo{year}{2023})
  \bibinfo{pages}{109--122}.

\bibitem[{Ito et~al.(2012)Ito, Jin, and Zou}]{IJZ1}
\bibinfo{author}{K.~Ito}, \bibinfo{author}{B.~Jin}, \bibinfo{author}{J.~Zou},
  \bibinfo{title}{A direct sampling method to an inverse medium scattering
  problem}, \bibinfo{journal}{Inverse Probl.}
  \bibinfo{volume}{28}~(\bibinfo{number}{2}) (\bibinfo{year}{2012})
  \bibinfo{pages}{Article No. 025003}.

\bibitem[{Ito et~al.(2013)Ito, Jin, and Zou}]{IJZ2}
\bibinfo{author}{K.~Ito}, \bibinfo{author}{B.~Jin}, \bibinfo{author}{J.~Zou},
  \bibinfo{title}{A direct sampling method for inverse electromagnetic medium
  scattering}, \bibinfo{journal}{Inverse Probl.}
  \bibinfo{volume}{29}~(\bibinfo{number}{9}) (\bibinfo{year}{2013})
  \bibinfo{pages}{Article No. 095018}.

\bibitem[{Park(2018)}]{P-DSM3}
\bibinfo{author}{W.-K. Park}, \bibinfo{title}{Direct sampling method for
  retrieving small perfectly conducting cracks}, \bibinfo{journal}{J. Comput.
  Phys.} \bibinfo{volume}{373} (\bibinfo{year}{2018})
  \bibinfo{pages}{648--661}.

\bibitem[{Son et~al.(2019)Son, Lee, and Park}]{SLP}
\bibinfo{author}{S.-H. Son}, \bibinfo{author}{K.-J. Lee},
  \bibinfo{author}{W.-K. Park}, \bibinfo{title}{Application and analysis of
  direct sampling method in real-world microwave imaging},
  \bibinfo{journal}{Appl. Math. Lett.} \bibinfo{volume}{96}
  (\bibinfo{year}{2019}) \bibinfo{pages}{47--53}.

\bibitem[{Ammari et~al.(2007)Ammari, Iakovleva, Lesselier, and
  Perrusson}]{AILP}
\bibinfo{author}{H.~Ammari}, \bibinfo{author}{E.~Iakovleva},
  \bibinfo{author}{D.~Lesselier}, \bibinfo{author}{G.~Perrusson},
  \bibinfo{title}{{MUSIC} type electromagnetic imaging of a collection of small
  three-dimensional inclusions}, \bibinfo{journal}{SIAM J. Sci. Comput.}
  \bibinfo{volume}{29}~(\bibinfo{number}{2}) (\bibinfo{year}{2007})
  \bibinfo{pages}{674--709}.

\bibitem[{Park(2015{\natexlab{a}})}]{P-MUSIC1}
\bibinfo{author}{W.-K. Park}, \bibinfo{title}{Asymptotic properties of
  {MUSIC}-type imaging in two-dimensional inverse scattering from thin
  electromagnetic inclusions}, \bibinfo{journal}{SIAM J. Appl. Math.}
  \bibinfo{volume}{75}~(\bibinfo{number}{1})
  (\bibinfo{year}{2015}{\natexlab{a}}) \bibinfo{pages}{209--228}.

\bibitem[{Park(2021)}]{P-MUSIC6}
\bibinfo{author}{W.-K. Park}, \bibinfo{title}{Application of {MUSIC} algorithm
  in real-world microwave imaging of unknown anomalies from scattering matrix},
  \bibinfo{journal}{Mech. Syst. Signal Proc.} \bibinfo{volume}{153}
  (\bibinfo{year}{2021}) \bibinfo{pages}{Article No. 107501}.

\bibitem[{Park et~al.(2017)Park, Kim, Lee, and Son}]{PKLS}
\bibinfo{author}{W.-K. Park}, \bibinfo{author}{H.~P. Kim},
  \bibinfo{author}{K.-J. Lee}, \bibinfo{author}{S.-H. Son},
  \bibinfo{title}{{MUSIC} algorithm for location searching of dielectric
  anomalies from ${S}-$parameters using microwave imaging},
  \bibinfo{journal}{J. Comput. Phys.} \bibinfo{volume}{348}
  (\bibinfo{year}{2017}) \bibinfo{pages}{259--270}.

\bibitem[{Agarwal et~al.(2010)Agarwal, Chen, and Zhong}]{ACZ}
\bibinfo{author}{K.~Agarwal}, \bibinfo{author}{X.~Chen},
  \bibinfo{author}{Y.~Zhong}, \bibinfo{title}{A multipole-expansion based
  linear sampling method for solving inverse scattering problems},
  \bibinfo{journal}{Opt. Express} \bibinfo{volume}{18}~(\bibinfo{number}{6})
  (\bibinfo{year}{2010}) \bibinfo{pages}{6366--6381}.

\bibitem[{Colton et~al.(2002)Colton, Haddar, and Monk}]{CHM}
\bibinfo{author}{D.~Colton}, \bibinfo{author}{H.~Haddar},
  \bibinfo{author}{P.~Monk}, \bibinfo{title}{The linear sampling method for
  solving the electromagnetic inverse scattering problem},
  \bibinfo{journal}{SIAM J. Sci. Comput.}
  \bibinfo{volume}{24}~(\bibinfo{number}{3}) (\bibinfo{year}{2002})
  \bibinfo{pages}{719--731}.

\bibitem[{Haddar and Monk(2002)}]{HM1}
\bibinfo{author}{H.~Haddar}, \bibinfo{author}{P.~Monk}, \bibinfo{title}{The
  linear sampling method for solving the electromagnetic inverse medium
  problem}, \bibinfo{journal}{Inverse Probl.}
  \bibinfo{volume}{18}~(\bibinfo{number}{3}) (\bibinfo{year}{2002})
  \bibinfo{pages}{891--906}.

\bibitem[{Kirsch and Ritter(2000)}]{KR}
\bibinfo{author}{A.~Kirsch}, \bibinfo{author}{S.~Ritter}, \bibinfo{title}{A
  linear sampling method for inverse scattering from an open arc},
  \bibinfo{journal}{Inverse Probl.} \bibinfo{volume}{16}~(\bibinfo{number}{1})
  (\bibinfo{year}{2000}) \bibinfo{pages}{89--105}.

\bibitem[{Bonnet(2011)}]{B1}
\bibinfo{author}{M.~Bonnet}, \bibinfo{title}{Fast identification of cracks
  using higher-order topological sensitivity for 2-{D} potential problems},
  \bibinfo{journal}{Eng. Anal. Bound. Elem.} \bibinfo{volume}{35}
  (\bibinfo{year}{2011}) \bibinfo{pages}{223--235}.

\bibitem[{Guzina and Pourahmadian(2015)}]{GP}
\bibinfo{author}{B.~Guzina}, \bibinfo{author}{F.~Pourahmadian},
  \bibinfo{title}{Why the high-frequency inverse scattering by topological
  sensitivity may work}, \bibinfo{journal}{Proc. Roy. Soc. A.}
  \bibinfo{volume}{471} (\bibinfo{year}{2015}) \bibinfo{pages}{20150187}.

\bibitem[{Lou{\"e}r and Rap{\'u}n(2017)}]{LR1}
\bibinfo{author}{F.~L. Lou{\"e}r}, \bibinfo{author}{M.-L. Rap{\'u}n},
  \bibinfo{title}{Topological sensitivity for solving inverse multiple
  scattering problems in 3{D} electromagnetism. {P}art {I}: one step method},
  \bibinfo{journal}{SIAM J. Imag. Sci.}
  \bibinfo{volume}{10}~(\bibinfo{number}{3}) (\bibinfo{year}{2017})
  \bibinfo{pages}{1291--1321}.

\bibitem[{Park(2017{\natexlab{a}})}]{P-TD5}
\bibinfo{author}{W.-K. Park}, \bibinfo{title}{Performance analysis of
  multi-frequency topological derivative for reconstructing perfectly
  conducting cracks}, \bibinfo{journal}{J. Comput. Phys.} \bibinfo{volume}{335}
  (\bibinfo{year}{2017}{\natexlab{a}}) \bibinfo{pages}{865--884}.

\bibitem[{Ammari et~al.(2012)Ammari, Garnier, Kang, Lim, and S{\o}lna}]{AGKLS}
\bibinfo{author}{H.~Ammari}, \bibinfo{author}{J.~Garnier},
  \bibinfo{author}{H.~Kang}, \bibinfo{author}{M.~Lim},
  \bibinfo{author}{K.~S{\o}lna}, \bibinfo{title}{Multistatic imaging of
  extended targets}, \bibinfo{journal}{SIAM J. Imag. Sci.}
  \bibinfo{volume}{5}~(\bibinfo{number}{2}) (\bibinfo{year}{2012})
  \bibinfo{pages}{564--600}.

\bibitem[{Ammari et~al.(2011)Ammari, Garnier, Kang, Park, and S{\o}lna}]{AGKPS}
\bibinfo{author}{H.~Ammari}, \bibinfo{author}{J.~Garnier},
  \bibinfo{author}{H.~Kang}, \bibinfo{author}{W.-K. Park},
  \bibinfo{author}{K.~S{\o}lna}, \bibinfo{title}{Imaging schemes for perfectly
  conducting cracks}, \bibinfo{journal}{SIAM J. Appl. Math.}
  \bibinfo{volume}{71}~(\bibinfo{number}{1}) (\bibinfo{year}{2011})
  \bibinfo{pages}{68--91}.

\bibitem[{Park(2015{\natexlab{b}})}]{P-SUB3}
\bibinfo{author}{W.-K. Park}, \bibinfo{title}{Multi-frequency subspace
  migration for imaging of perfectly conducting, arc-like cracks in full- and
  limited-view inverse scattering problems}, \bibinfo{journal}{J. Comput.
  Phys.} \bibinfo{volume}{283} (\bibinfo{year}{2015}{\natexlab{b}})
  \bibinfo{pages}{52--80}.

\bibitem[{Park(2017{\natexlab{b}})}]{P-SUB8}
\bibinfo{author}{W.-K. Park}, \bibinfo{title}{A novel study on subspace
  migration for imaging of a sound-hard arc}, \bibinfo{journal}{Comput. Math.
  Appl.} \bibinfo{volume}{74}~(\bibinfo{number}{12})
  (\bibinfo{year}{2017}{\natexlab{b}}) \bibinfo{pages}{3000--3007}.

\bibitem[{Harris and Nguyen(2020)}]{HN}
\bibinfo{author}{I.~Harris}, \bibinfo{author}{D.-L. Nguyen},
  \bibinfo{title}{Orthogonality sampling method for the electromagnetic inverse
  scattering problem}, \bibinfo{journal}{SIAM J. Sci. Comput.}
  \bibinfo{volume}{42}~(\bibinfo{number}{3}) (\bibinfo{year}{2020})
  \bibinfo{pages}{B722--B737}.

\bibitem[{Park(2023{\natexlab{a}})}]{P-OSM1}
\bibinfo{author}{W.-K. Park}, \bibinfo{title}{On the application of
  orthogonality sampling method for object detection in microwave imaging},
  \bibinfo{journal}{IEEE Trans. Antennas Propag.}
  \bibinfo{volume}{71}~(\bibinfo{number}{1})
  (\bibinfo{year}{2023}{\natexlab{a}}) \bibinfo{pages}{934--946}.

\bibitem[{Park(2023{\natexlab{b}})}]{P-OSM2}
\bibinfo{author}{W.-K. Park}, \bibinfo{title}{A novel study on the
  orthogonality sampling method in microwave imaging without background
  information}, \bibinfo{journal}{Appl. Math. Lett.} \bibinfo{volume}{145}
  (\bibinfo{year}{2023}{\natexlab{b}}) \bibinfo{pages}{Article No. 108766}.

\bibitem[{Potthast(2010)}]{P1}
\bibinfo{author}{R.~Potthast}, \bibinfo{title}{A study on orthogonality
  sampling}, \bibinfo{journal}{Inverse Probl.} \bibinfo{volume}{26}
  (\bibinfo{year}{2010}) \bibinfo{pages}{Article No. 074015}.

\bibitem[{Kim et~al.(2019)Kim, Lee, Kim, Jeon, and Son}]{KLKJS}
\bibinfo{author}{J.-Y. Kim}, \bibinfo{author}{K.-J. Lee},
  \bibinfo{author}{B.-R. Kim}, \bibinfo{author}{S.-I. Jeon},
  \bibinfo{author}{S.-H. Son}, \bibinfo{title}{Numerical and experimental
  assessments of focused microwave thermotherapy system at 925{MHz}},
  \bibinfo{journal}{ETRI J.} \bibinfo{volume}{41}~(\bibinfo{number}{6})
  (\bibinfo{year}{2019}) \bibinfo{pages}{850--862}.

\bibitem[{Park(2022)}]{P-SUB16}
\bibinfo{author}{W.-K. Park}, \bibinfo{title}{Real-time detection of small
  anomaly from limited-aperture measurements in real-world microwave imaging},
  \bibinfo{journal}{Mech. Syst. Signal Proc.} \bibinfo{volume}{171}
  (\bibinfo{year}{2022}) \bibinfo{pages}{Article No. 108937}.

\bibitem[{Park(2023{\natexlab{c}})}]{P-SUB18}
\bibinfo{author}{W.-K. Park}, \bibinfo{title}{On the identification of small
  anomaly in microwave imaging without homogeneous background information},
  \bibinfo{journal}{AIMS Math.} \bibinfo{volume}{8}~(\bibinfo{number}{11})
  (\bibinfo{year}{2023}{\natexlab{c}}) \bibinfo{pages}{27210--27226}.

\bibitem[{Haynes et~al.(2014)Haynes, Stang, and Moghaddam}]{HSM2}
\bibinfo{author}{M.~Haynes}, \bibinfo{author}{J.~Stang},
  \bibinfo{author}{M.~Moghaddam}, \bibinfo{title}{Real-time microwave imaging
  of differential temperature for thermal therapy monitoring},
  \bibinfo{journal}{IEEE Trans. Biomed. Eng.}
  \bibinfo{volume}{61}~(\bibinfo{number}{6}) (\bibinfo{year}{2014})
  \bibinfo{pages}{1787--1797}.

\bibitem[{Slaney et~al.(1984)Slaney, Kak, and Larsen}]{SKL}
\bibinfo{author}{M.~Slaney}, \bibinfo{author}{A.~C. Kak},
  \bibinfo{author}{L.~E. Larsen}, \bibinfo{title}{Limitations of imaging with
  first-order diffraction tomography}, \bibinfo{journal}{IEEE Trans. Microwave
  Theory Tech.} \bibinfo{volume}{32}~(\bibinfo{number}{8})
  (\bibinfo{year}{1984}) \bibinfo{pages}{860--874}.

\bibitem[{Park(2019)}]{P-SUB11}
\bibinfo{author}{W.-K. Park}, \bibinfo{title}{Real-time microwave imaging of
  unknown anomalies via scattering matrix}, \bibinfo{journal}{Mech. Syst.
  Signal Proc.} \bibinfo{volume}{118} (\bibinfo{year}{2019})
  \bibinfo{pages}{658--674}.

\bibitem[{Son et~al.(2010)Son, Simonov, Kim, Lee, and Jeon}]{SSKLJ}
\bibinfo{author}{S.-H. Son}, \bibinfo{author}{N.~Simonov},
  \bibinfo{author}{H.-J. Kim}, \bibinfo{author}{J.-M. Lee},
  \bibinfo{author}{S.-I. Jeon}, \bibinfo{title}{Preclinical prototype
  development of a microwave tomography system for breast cancer detection},
  \bibinfo{journal}{ETRI J.} \bibinfo{volume}{32} (\bibinfo{year}{2010})
  \bibinfo{pages}{901--910}.

\bibitem[{Park(????)}]{P-KIR1}
\bibinfo{author}{W.-K. Park}, \bibinfo{title}{Real-time identification of small
  anomalies from scattering matrix without background information},
  \bibinfo{journal}{Int. J. Appl. Electromagn. Mech.} .

\bibitem[{Colton and Kress(1998)}]{CK}
\bibinfo{author}{D.~Colton}, \bibinfo{author}{R.~Kress},
  \bibinfo{title}{Inverse Acoustic and Electromagnetic Scattering Problems},
  vol.~\bibinfo{volume}{93} of \emph{\bibinfo{series}{Mathematics and
  Applications Series}}, \bibinfo{publisher}{Springer}, \bibinfo{address}{New
  York}, \bibinfo{year}{1998}.

\bibitem[{Kang et~al.(2022)Kang, Chae, and Park}]{KCP1}
\bibinfo{author}{S.~Kang}, \bibinfo{author}{S.~Chae}, \bibinfo{author}{W.-K.
  Park}, \bibinfo{title}{A study on the orthogonality sampling method
  corresponding to the observation directions configuration},
  \bibinfo{journal}{Res. Phys.} \bibinfo{volume}{33} (\bibinfo{year}{2022})
  \bibinfo{pages}{Article No. 105108}.

\end{thebibliography}
\end{document}